\def\Int{\operatorname{Int}}
\def\bord{\partial}
\let\bydef\emph
\def\var{ma\-ni\-fold}
\def\svar{sub\-ma\-ni\-fold}
\def\orbi{or\-bi\-fold}
\def\sbo{sub\-or\-bi\-fold}
\def\irr{ir\-re\-du\-ci\-ble}
\def\comp{comp\-res\-si\-ble}
\def\incomp{in\-comp\-res\-si\-ble}
\def\turn{turnover}
\newtheorem{theo}{Theorem}[section]
\newtheorem{prop}[theo]{Proposition}
\newtheorem{lem}[theo]{Lemma}
\theoremstyle{definition}
\newenvironment{rem}{\bigskip\emph{Remark.}}{}
\newenvironment{rems}{\bigskip\emph{Remarks.}\begin{itemize}}{\end{itemize}}
\def\Rr{\mathbf{R}}
\def\Zz{\mathbf{Z}}
\def\calS{\mathcal{S}}
\def\calO{\mathcal{O}}
\begin{document}
\title{Some open $3$-manifolds and $3$-orbifolds without locally finite canonical decompositions}
\author{Sylvain Maillot}

\maketitle

\begin{abstract}
We give examples of open 3-manifolds and 3-orbifolds that exhibit
pathological behavior with respect to splitting along surfaces
(2-suborbifolds) with nonnegative Euler characteristic.
\end{abstract}

\section*{Introduction}

Much of the theory of compact $3$-manifolds relies on decompositions
into canonical pieces, in particular the Kneser-Milnor prime decomposition~\cite{kneser:sphere,milnor:unique}, and the
Jaco-Shalen-Johannson characteristic splitting~\cite{js:seifert,joh:hom}. These have led
to important developments in group theory~\cite{rs:jsj,ds:jsj,fp:jsj,scottswarup}, and form the background of W.~Thurston's geometrization conjecture, which has recently been proved by
G. Perelman~\cite{per1,per2,per3}.

For open $3$-manifolds,
by contrast, there is not even a conjectural description of a general
$3$-manifold in terms of geometric ones. Such a description
would be all the more useful that noncompact hyperbolic $3$-manifolds
are now increasingly well-understood, thanks in particular
to the recent proofs of the ending lamination conjecture~\cite{bcm:kleinean1,bcm:kleinean2}
and the tameness conjecture~\cite{cg:shrink,agol:tameness}.

The goal of this paper is to present a series of examples which show
that naive generalizations to open $3$-manifolds of the canonical
decomposition theorems of compact $3$-manifold theory are false. This
contrasts with the positive results of~\cite{maillot:spherical, maillot:jsj} which give decompositions
under various hypotheses.

We now describe our examples and their properties in more detail.
All manifolds and orbifolds in the following discussion
are connected, orientable, and without boundary.

An embedded $2$-sphere $S$ in a $3$-\var\ $M$ is called
\bydef{compressible} if $S$ bounds a $3$-ball in $M$. If all
$2$-spheres in $M$ are compressible, we say that $M$ is \irr. A
\bydef{spherical decomposition} $\calS$ of a $3$-manifold $M$ is a
locally finite collection of (possibly nonseparating) pairwise
disjoint embedded $2$-spheres in $M$ such that the operation of
cutting $M$ along $\calS$ and gluing a ball to each boundary component
of the resulting manifold yields a collection of irreducible
manifolds. Note that if $\calS$ is a spherical decomposition, then the
collection of spheres obtained by removing compressible spheres of
$\calS$ is still a spherical decomposition.

Kneser's theorem is equivalent to the statement that every compact
$3$-\var\ has a spherical decomposition. The first two examples in this
paper show that this result does not generalize to open manifolds.
The first relevant example was given by P.~Scott~\cite{scott:noncompact}. Our
example $M_1$ is simpler, and has additional properties: for instance, it
is a graph manifold, and has only one end. Our second example $M_2$ is closer
in spirit to Scott's; its main purpose is to lead to example $M_3$ below.

The remaining examples are concerned with generalizing the toric splitting of
Jaco-Shalen and Johannson. The correct definition of a JSJ-splitting
for open $3$-manifolds is still open to debate; however we make a
few observations. Call an embedded torus $T$ in a $3$-\var\ $M$
\bydef{incompressible} if it is $\pi_1$-injective, and
\bydef{canonical} if it is incompressible, and any incompressible torus
in $M$ is homotopically disjoint from $T$. The version of the toric splitting theorem proved in~\cite{ns:canonical} asserts that if one takes a collection $\mathcal T$ of pairwise disjoint representatives
of all homotopy classes of canonical tori in $M$ (which is always possible, for
instance by taking least area surfaces in some generic Riemannian metric), then
$\mathcal T$ splits $M$ into
\svar s that are either Seifert-fibered or atoroidal. This approach can be generalized to
$3$-\orbi s~\cite[Chapter~3] {bmp}. Here tori are replaced by \bydef{toric} $2$-orbifolds, i.e.~finite quotients
of tori. Those include \bydef{pillows}, i.e.~spheres with four cone points of order $2$.

Let $N_1$ be an orientable $3$-\var\ whose boundary is an annulus, which
does not contain any essential tori and open annuli, and is not homeomorphic
to $S^1\times \Rr\times [0,+\infty)$. Let $N_2$ be  the
product of $S^1$ with an orientable surface of infinite genus whose boundary is a
line. Since $N_2$ is a Seifert fiber space of infinite topological complexity, it
contains many incompressible tori, neither of which is canonical.
By gluing $N_1$ and $N_2$ along their boundaries, one obtains an
open $3$-\var\ $M$  containing again
many \incomp\ tori, neither of which are canonical. It is easy to
construct infinite families of disjoint \incomp\ tori which cannot
be made locally finite by any isotopy. However, there is in our view
nothing pathological about this example: the ``JSJ-splitting'' in
this case should consist of the single open annulus $A$, which splits $M$
into a Seifert part and an atoroidal part.

This discussion makes plausible the idea that every \irr\ open $3$-\var\ (or $3$-orbifold) $M$ could
have a JSJ-splitting consisting of a representative of each class of canonical tori (toric \sbo s),
plus some properly embedded, \incomp\ open annuli (annular \sbo s),
splitting $M$ into pieces which are either
atoroidal, or maximal Seifert \svar s (\orbi s). Furthermore, each \incomp\ torus (toric \sbo) in
$M$ should be homotopic into some Seifert piece.

However, if one wishes to stick to locally finite splittings, this simple idea does not work, as examples 3--5 show. Our example $M_3$ is an \irr\ open
$3$-\var\ which contains an infinite collection $\{T'(v)\}$ of pairwise
non-isotopic canonical tori and a compact set $X'$ that meets every torus isotopic to
some $T'(v)$. In particular, it is impossible to select a representative in each
isotopy class of canonical tori to form a locally finite collection.

The manifold $M_3$ is constructed as a finite cover of a $3$-\orbi\ $\calO_3$ which contains infinitely many isotopy classes of canonical pillows, but such that no infinite collection of pairwise nonisotopic canonical pillows is locally finite. Moreover,
all \incomp\ toric $2$-\sbo s in $\calO_3$ are pillows, and they are all canonical.

The next example $\calO_4$ is another open, \irr\ $3$-\orbi\ with the property that there are infinitely
many isotopy classes of canonical pillows, but no infinite, locally finite collection
of representatives. Again, all of its  \incomp\ toric $2$-\sbo s are pillows. However, unlike $\calO_3$,
it also contains infinitely many classes of \emph{non-canonical} \incomp\ pillows. Such pillows
go in pairs and can be used to produce Seifert \sbo s bounded by canonical pillows, which
also accumulate in an essential way, and hence do not give a maximal Seifert \sbo\ which
would contain all \incomp\ toric $2$-\sbo s up to isootopy. Since the
underlying space of $\calO_4$ is $\Rr^3$, it is somewhat easier to visualize than $M_3$ and $\calO_3$.

Lastly, the example $\calO_5$ is an open, \irr\ $3$-\orbi\ which contains infinitely many isotopy
classes of non-canonical \incomp\ pillows, but not a single canonical toric $2$-\sbo. Its
pathological character comes from the fact that it is impossible to find a Seifert \sbo\ that contains
all \incomp\ pillows up to isotopy. Instead, one finds an infinite collection of incompatible
Seifert \sbo s which can be made pairwise disjoint, but all intersect essentially some fixed
compact subset of $\calO_5$. This shows that strange things can occur even without canonical
toric \sbo s.

For terminology and background on $3$-\orbi s and their geometric decompositions, we refer~to~\cite{bmp}.

\paragraph{Acknowledgements}

I would like to thank Michel Boileau and Luisa Paoluzzi for stimulating conversations
and Peter Scott for useful correspondence.

\section{Example 1}

Our first example is a one-ended open $3$-manifold $M_1$ that does not
have any spherical decomposition. 

Let $F$ be the orientable surface with one end, infinite genus and one boundary component homeomorphic to the circle. Set $N:=S^1 \times F$.
Let $M_1$ be the $3$-manifold obtained by
gluing a solid torus $V\cong D^2\times S^1$ to $N$ so that
the boundary of the $D^2$ factor of $V$ is glued to the $S^1$ factor
of $N$, and the $S^1$ factor of $V$ is glued to $\bord F$.

We observe that the universal cover of $\Int N$ is $\Rr^3$.
We know from Alexander's theorem that $\Rr^3$ is \irr, so by an
elementary argument, $N$ is also \irr.

Let us prove by contradiction that $M_1$ cannot have a spherical
decomposition $\calS$. We may assume that there are no compressible
spheres in $\calS$. Since $N$ is irreducible, all spheres in $\calS$
must intersect $V$.  Since $V$ is compact and $\calS$ is locally
finite, we deduce that $\calS$ must be finite.

Hence there is a compact subsurface $X\subset F$ such that every
sphere in $\calS$ lies in $V\cup S^1\times X$.  Since $F$ has infinite
genus, we can find a properly embedded arc $\alpha\subset F$ and a
simple closed curve $\beta\subset F-X$ which intersect transversally
in a single point.  We then obtain an embedded $2$-sphere $S\subset M$
by taking the annulus $S^1\times\alpha$ and gluing a meridian disk to
each boundary component.

We may assume that $S$ is in general position with respect to $\calS$.
After finitely many isotopies and surgeries along disks in $\bigcup\calS$,
we get a finite collection $S_1,\ldots,S_n$ of embedded $2$-spheres
in $M_1$ such that $[S]=\sum_i [S_i]\in H_2(M_1)$ and each $S_i$ is disjoint
from $\bigcup\calS$. Since $\calS$ is a spherical decomposition,
each $S_i$ either bounds a ball, or cobounds a punctured $3$-sphere with
some members of $\calS$.

As a result, the homology class $[S]\in H_2(M_1)$
can be written as a finite sum of classes $[S_j]$ with $S_j\in\calS$.
Now the intersection number of each $S_j$ with $\beta$ is zero, while
that of $S$ with $\beta$ is one. This is a contradiction.

\begin{rems}
\item The first example of an open $3$-manifold without a spherical decomposition
was given by P.~Scott. His construction is quite intricate and his
example is simply-connected. Our example is simpler; it is far from
simply-connected however, in fact its fundamental group is an
infinitely generated free group. Our example has one end; it is easy
to modify the construction to give any number of ends.
\item Our example
is a graph-manifold: this is of some interest since
those manifolds arise in the theory of collapsing sequences of manifolds
in Riemannian geometry.
\item There is an alternative description of $M_1$ as the double of
the manifold $H=I\times F$. Any properly embedded arc in $F$ gives
a properly embedded $2$-disk in $H$, which gives a sphere in the double.
One readily sees that all those spheres have to intersect the annulus
$I\times \bord F$. I owe this remark to Saul Schleimer.
\end{rems}

\section{Example 2}
Here we give another example of a manifold without any spherical decomposition.
The main interest of this construction is that a simple modification of it
will give a manifold which behaves pathologically with respect to
essential tori.

First we define an open $3$-\emph{orbifold} $\calO_2$. In the following
construction, all local groups will be cyclic of order $2$. We let $X$
be a $3$-ball with singular locus a trivial $2$-tangle, whose
components are denoted by $\sigma_l$ and $\sigma_r$. Let $Y$ be a
thrice punctured $3$-sphere with singular locus consisting of six
unknotted arcs, as in Figure~\ref{fig:Y}. The boundary components of
$Y$ are denoted by $\bord_u Y$, $\bord_l Y$, and $\bord_r Y$ (where
the letters $u$, $l$, $r$ stand for ``up'', ``left'' and ``right''
respectively). There are two arcs $\sigma_l^1, \sigma_l^2$ connecting
$\bord_u Y$ to $\bord_l Y$, two arcs $\sigma_r^1,\sigma_r^2$
connecting $\bord_u Y$ to $\bord_r Y$, and two arcs
$\sigma_m^1,\sigma_m^2$ connecting $\bord_l Y$ to $\bord_r Y$.

\begin{figure}[ht]
\begin{center}
\setlength{\unitlength}{0.00083333in}
\begingroup\makeatletter\ifx\SetFigFont\undefined%
\gdef\SetFigFont#1#2#3#4#5{%
  \reset@font\fontsize{#1}{#2pt}%
  \fontfamily{#3}\fontseries{#4}\fontshape{#5}%
  \selectfont}%
\fi\endgroup%
{\renewcommand{\dashlinestretch}{30}
\begin{picture}(5536,5950)(0,-10)
\put(1568,2767){\ellipse{1812}{1812}}
\put(3968,2767){\ellipse{1812}{1812}}
\put(2768,2767){\ellipse{5520}{5520}}
\thicklines
\path(2468,2917)(3068,2917)
\path(968,4867)(1448,3667)
\path(4406,3551)(4858,4567)
\path(4170,3646)(4650,4807)
\path(2450,3067)(3143,3067)
\path(732,4639)(1184,3559)
\thinlines
\path(668,2767)(669,2766)(671,2764)
	(675,2761)(682,2756)(691,2749)
	(703,2740)(718,2729)(736,2717)
	(756,2702)(779,2687)(804,2670)
	(831,2653)(860,2636)(891,2618)
	(923,2601)(957,2584)(992,2567)
	(1030,2551)(1069,2536)(1111,2522)
	(1156,2510)(1204,2498)(1255,2488)
	(1310,2479)(1368,2473)(1429,2468)
	(1493,2467)(1553,2468)(1612,2472)
	(1670,2478)(1726,2485)(1779,2494)
	(1830,2505)(1879,2516)(1926,2529)
	(1971,2542)(2014,2557)(2056,2572)
	(2096,2587)(2135,2603)(2173,2619)
	(2210,2636)(2245,2652)(2279,2668)
	(2310,2684)(2340,2698)(2366,2712)
	(2390,2725)(2410,2735)(2428,2745)
	(2441,2752)(2452,2758)(2459,2762)
	(2464,2765)(2467,2766)(2468,2767)
\path(3068,2767)(3069,2766)(3071,2764)
	(3075,2761)(3082,2756)(3091,2749)
	(3103,2740)(3118,2729)(3136,2717)
	(3156,2702)(3179,2687)(3204,2670)
	(3231,2653)(3260,2636)(3291,2618)
	(3323,2601)(3357,2584)(3392,2567)
	(3430,2551)(3469,2536)(3511,2522)
	(3556,2510)(3604,2498)(3655,2488)
	(3710,2479)(3768,2473)(3829,2468)
	(3893,2467)(3953,2468)(4012,2472)
	(4070,2478)(4126,2485)(4179,2494)
	(4230,2505)(4279,2516)(4326,2529)
	(4371,2542)(4414,2557)(4456,2572)
	(4496,2587)(4535,2603)(4573,2619)
	(4610,2636)(4645,2652)(4679,2668)
	(4710,2684)(4740,2698)(4766,2712)
	(4790,2725)(4810,2735)(4828,2745)
	(4841,2752)(4852,2758)(4859,2762)
	(4864,2765)(4867,2766)(4868,2767)
\dashline{60.000}(668,2767)(670,2767)(675,2768)
	(684,2770)(698,2773)(718,2777)
	(743,2782)(774,2788)(811,2795)
	(853,2803)(898,2811)(946,2821)
	(997,2830)(1048,2839)(1100,2849)
	(1151,2858)(1201,2866)(1249,2875)
	(1295,2882)(1340,2889)(1382,2896)
	(1421,2902)(1459,2907)(1494,2911)
	(1528,2915)(1560,2918)(1591,2921)
	(1620,2923)(1648,2925)(1675,2926)
	(1702,2927)(1728,2927)(1759,2927)
	(1790,2926)(1821,2924)(1852,2921)
	(1882,2918)(1914,2914)(1946,2908)
	(1980,2902)(2014,2895)(2050,2887)
	(2088,2878)(2127,2868)(2167,2858)
	(2207,2847)(2248,2835)(2287,2823)
	(2325,2812)(2359,2802)(2390,2792)
	(2415,2784)(2436,2777)(2451,2773)
	(2460,2770)(2466,2768)(2468,2767)
\dashline{60.000}(3128,2752)(3130,2752)(3135,2753)
	(3144,2755)(3158,2758)(3178,2762)
	(3203,2767)(3234,2773)(3271,2780)
	(3313,2788)(3358,2796)(3406,2806)
	(3457,2815)(3508,2824)(3560,2834)
	(3611,2843)(3661,2851)(3709,2860)
	(3755,2867)(3800,2874)(3842,2881)
	(3881,2887)(3919,2892)(3954,2896)
	(3988,2900)(4020,2903)(4051,2906)
	(4080,2908)(4108,2910)(4135,2911)
	(4162,2912)(4188,2912)(4219,2912)
	(4250,2911)(4281,2909)(4312,2906)
	(4342,2903)(4374,2899)(4406,2893)
	(4440,2887)(4474,2880)(4510,2872)
	(4548,2863)(4587,2853)(4627,2843)
	(4667,2832)(4708,2820)(4747,2808)
	(4785,2797)(4819,2787)(4850,2777)
	(4875,2769)(4896,2762)(4911,2758)
	(4920,2755)(4926,2753)(4928,2752)
\path(848,817)(849,817)(850,816)
	(854,816)(859,814)(866,813)
	(876,810)(889,808)(905,804)
	(924,800)(946,795)(972,789)
	(1000,783)(1032,776)(1067,768)
	(1104,760)(1145,751)(1187,742)
	(1232,733)(1279,723)(1328,713)
	(1379,703)(1431,693)(1484,683)
	(1538,672)(1594,662)(1651,652)
	(1709,642)(1768,632)(1828,623)
	(1889,613)(1951,604)(2015,595)
	(2080,586)(2147,578)(2215,570)
	(2285,562)(2357,555)(2431,548)
	(2507,542)(2585,536)(2664,531)
	(2745,526)(2827,522)(2910,519)
	(2993,517)(3087,516)(3179,516)
	(3267,517)(3352,519)(3433,522)
	(3510,526)(3583,531)(3653,536)
	(3719,542)(3781,549)(3841,556)
	(3898,564)(3952,572)(4004,580)
	(4054,589)(4102,598)(4149,608)
	(4193,617)(4236,627)(4277,637)
	(4316,646)(4354,656)(4390,666)
	(4423,675)(4455,684)(4484,692)
	(4511,700)(4536,708)(4558,714)
	(4577,720)(4593,726)(4607,730)
	(4618,734)(4627,737)(4633,739)
	(4638,740)(4641,741)(4642,742)(4643,742)
\dottedline{45}(848,817)(849,817)(851,818)
	(854,819)(860,821)(867,824)
	(878,827)(892,832)(908,838)
	(928,844)(952,852)(978,861)
	(1008,871)(1042,881)(1078,893)
	(1117,905)(1158,918)(1202,931)
	(1248,945)(1296,959)(1346,974)
	(1398,988)(1451,1003)(1505,1017)
	(1561,1031)(1617,1045)(1675,1059)
	(1735,1072)(1795,1085)(1857,1098)
	(1920,1110)(1985,1121)(2052,1132)
	(2120,1142)(2191,1152)(2264,1161)
	(2338,1169)(2416,1176)(2495,1182)
	(2577,1186)(2660,1190)(2745,1192)
	(2832,1193)(2918,1192)(3008,1189)
	(3096,1185)(3181,1179)(3264,1172)
	(3343,1163)(3419,1153)(3491,1143)
	(3561,1131)(3627,1119)(3690,1106)
	(3750,1092)(3808,1078)(3864,1063)
	(3917,1048)(3969,1032)(4019,1016)
	(4067,1000)(4113,983)(4159,966)
	(4202,949)(4244,932)(4285,915)
	(4323,898)(4360,882)(4396,865)
	(4429,850)(4460,835)(4489,821)
	(4515,808)(4539,796)(4560,785)
	(4579,776)(4595,768)(4608,761)
	(4619,755)(4628,750)(4634,747)
	(4638,745)(4641,743)(4642,742)(4643,742)
\put(1193,4492){\makebox(0,0)[lb]{{\SetFigFont{12}{14.4}{\rmdefault}{\mddefault}{\updefault}$\sigma^2_l$}}}
\put(668,3967){\makebox(0,0)[lb]{{\SetFigFont{12}{14.4}{\rmdefault}{\mddefault}{\updefault}$\sigma^1_l$}}}
\put(2618,2617){\makebox(0,0)[lb]{{\SetFigFont{12}{14.4}{\rmdefault}{\mddefault}{\updefault}$\sigma^2_m$}}}
\put(1493,1567){\makebox(0,0)[lb]{{\SetFigFont{12}{14.4}{\rmdefault}{\mddefault}{\updefault}$\bord_l Y$}}}
\put(4718,3892){\makebox(0,0)[lb]{{\SetFigFont{12}{14.4}{\rmdefault}{\mddefault}{\updefault}$\sigma^2_r$}}}
\put(4118,4417){\makebox(0,0)[lb]{{\SetFigFont{12}{14.4}{\rmdefault}{\mddefault}{\updefault}$\sigma^1_r$}}}
\put(2618,3217){\makebox(0,0)[lb]{{\SetFigFont{12}{14.4}{\rmdefault}{\mddefault}{\updefault}$\sigma^1_m$}}}
\put(3818,1567){\makebox(0,0)[lb]{{\SetFigFont{12}{14.4}{\rmdefault}{\mddefault}{\updefault}$\bord_r Y$}}}
\put(2618,5767){\makebox(0,0)[lb]{{\SetFigFont{12}{14.4}{\familydefault}{\mddefault}{\updefault}$\bord_u Y$}}}
\end{picture}
}
\end{center}
\caption {\label{fig:Y} $Y$}
\end{figure}

Then we take a countable collection of copies of $Y$ indexed by the
vertices of the regular rooted binary tree $\mathcal T$. We glue them together
according to the following rule: each copy $Y_u$ of $Y$ has two sons, a
left son $Y_l$ and a right son $Y_r$. Then we glue $\bord_l Y_u$ to
$\bord_u Y_l$ so that $\sigma_l^i(Y_u)$ is glued to $\sigma_l^i(Y_l)$
and $\sigma_m^i(Y_u)$ is glued to $\sigma_r^i(Y_l)$ for $i=1,2$. Likewise
we glue $\bord_r Y_u$ to
$\bord_u Y_r$ so that $\sigma_r^i(Y_u)$ is glued to $\sigma_r^i(Y_r)$
and $\sigma_m^i(Y_u)$ is glued to $\sigma_l^i(Y_r)$ for $i=1,2$. 

In this way we get a noncompact $3$-orbifold $N$ with a single
boundary component which is the upper boundary of the ancestor $Y_0$. We
glue in a copy of $X$ so that $\sigma_l(X)$ is glued to $\sigma_l^i(Y_0)$
and $\sigma_r(X)$ is glued to $\sigma_r^i(Y_0)$. We call $\calO_2$
the resulting open $3$-orbifold.

For future reference, we note:
\begin{lem}\label{N irr}
The orbifold $N$ is irreducible.
\end{lem}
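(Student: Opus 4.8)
The plan is to exploit the structure of $N$ as an infinite \emph{graph of \orbi s} carried by the rooted binary tree $\calT$: the vertex \orbi s are the copies $Y_v$ of $Y$, and the edge \orbi s are the pillows $\bord_l Y_u=\bord_u Y_l$ and $\bord_r Y_u=\bord_u Y_r$ along which successive copies are glued; write $\mathcal{P}$ for the resulting locally finite family of pillows in $N$. The \irrty\ of $N$ will be deduced from two statements about the single \emph{compact} block $Y$: first, that $Y$ is a very good \orbi\ and is \irr; and second, that each of the three boundary pillows of $Y$ is \incomp\ in $Y$, i.e.\ $\pi_1^{\mathrm{orb}}$-injective. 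Granting these, Bass--Serre theory applied to the graph of \gr s over $\calT$ shows that each pillow $P\in\mathcal{P}$ is \incomp\ in $N$ (its \fund\ \gr\ injects into one of the two adjacent vertex \gr s), while the finite covers of $Y$ below, chosen compatibly along $\mathcal{P}$, assemble into a finite \var\ cover of $N$; in particular $N$ is good, so it contains no bad $2$-\sbo.

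The main obstacle is establishing the two facts about $Y$; the rest is routine. Here I would use the hypothesis that the six arcs $\sigma_l^i,\sigma_r^i,\sigma_m^i$ are unknotted and unlinked, as Figure~\ref{fig:Y} indicates, to exhibit an explicit finite \var\ cover $\widetilde Y\to Y$ with $\widetilde Y$ an \irr\ $3$-\var\ bounded by \incomp\ tori, each of which double covers a boundary pillow of $Y$. The unknottedness should be exactly what makes $\widetilde Y$ transparent enough to read off its \irrty\ and boundary-incompressibility directly; the two facts about $Y$ then follow by a transfer argument together with the equivariant sphere theorem. (One can also argue entirely within $Y$, excluding bad $2$-\sbo s and \emph{discal} compressing disks of the boundary pillows; the delicate point then is that a curve essential in a pillow which bounds a $D^2(2,2)$ in $Y$ does \emph{not} witness compressibility, so one must use the positions of the arcs to rule out a compressing $D^2$ or $D^2(2)$.)

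Granting the two facts, $N$ is \irr\ by the standard innermost-disk argument, and the infiniteness of $\calT$ is harmless because every $2$-\sbo\ is compact. Indeed, let $S\subset N$ be a spherical $2$-\sbo, isotoped transverse to $\mathcal{P}$ and so that $S\cap\bigcup\mathcal{P}$ misses the cone points of the pillows. Then $S$ meets only finitely many pillows and $S\cap\bigcup\mathcal{P}$ is a finite disjoint union of circles, each lying in the smooth part of some $P\in\mathcal{P}$. Every circle in a spherical $2$-\sbo\ bounds a $D^2$ or a $D^2(2)$ on one side, so the class of each such circle $c$ has order at most $2$ in $\pi_1^{\mathrm{orb}}(N)$; since $P$ is \incomp\ in $N$ this forces $c$ to bound a $D^2$ or a $D^2(2)$ in $P$ as well. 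Now take $c$ innermost on $S$: the disk $\delta$ it cuts off lies in a single block $Y_v$, and $\delta$ together with the $D^2$ or $D^2(2)$ bounded by $c$ in the adjacent pillow forms a closed $2$-\sbo\ of $Y_v$ which one checks to be spherical (using incompressibility of $P$ and that $N$ is good, so that meridians of the singular locus are non-trivial); by \irrty\ of $Y_v$ it bounds a discal $3$-\sbo\ of $Y_v$, across which $S$ may be isotoped so as to reduce $\#(S\cap\bigcup\mathcal{P})$. Iterating, $S$ is pushed off $\mathcal{P}$ entirely, hence lies in a single $Y_v$, where it bounds a discal $3$-\sbo\ by \irrty\ of $Y_v$; this $3$-\sbo\ also works in $N$. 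Hence $N$ is \irr. For the orbifold versions of all of these tools we refer to~\cite{bmp}.
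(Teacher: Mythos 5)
Your overall reduction is the same as the paper's: both arguments observe that the copies of $Y$ are glued along incompressible pillows and thereby reduce the lemma to properties of the single compact block $Y$. The trouble is that you never establish those properties. The whole content of the lemma is concentrated in the two facts you defer --- irreducibility (and goodness) of $Y$, and $\pi_1$-injectivity of its boundary pillows --- and for these you offer only a plan: you ``would'' exhibit a finite manifold cover $\widetilde Y\to Y$ whose irreducibility and boundary-incompressibility ``should'' be readable off from the unknottedness of the arcs, and then transfer back via the equivariant sphere theorem. Nothing is actually constructed or verified: you do not identify $\widetilde Y$ (it would be the double cover of the thrice-punctured $S^3$ branched over the six arcs), you do not check that it is irreducible with incompressible boundary, and you do not carry out the transfer. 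Note also that the same unproved incompressibility of the pillows is what your Bass--Serre reduction and your capping step (the circle $c$ having infinite order in $\pi_1 P$ versus order at most $2$ from the spherical side) both rest on. So the proposal is an unexecuted strategy precisely at the step where the lemma's content lies; the innermost-circle bookkeeping you do spell out (including the teardrop exclusion) is the routine part, and is essentially the reduction the paper dismisses in one sentence.

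For comparison, the paper handles $Y$ directly and elementarily: a spherical $2$-suborbifold $S\subset Y$ meets $\Sigma_Y$ in at most three points (all cone orders are $2$, so four points would give a pillow), whereas a $2$-sphere in $|Y|$ that does not bound a ball must separate the three boundary spheres and hence, since each pair of boundary components is joined by two arcs of $\Sigma_Y$, meets $\Sigma_Y$ in at least four points; therefore $|S|$ bounds a ball $B$, and since $Y\setminus\Sigma_Y$ is irreducible the only way $S$ could fail to bound a discal suborbifold is for $B\cap\Sigma_Y$ to be a knotted arc --- impossible, because that arc extends to an unknotted component of $\Sigma_Y$. If you wish to keep the covering-space route, you must actually exhibit the branched double cover and prove its irreducibility and boundary incompressibility (or cite a precise statement, as the paper does via Dunbar for the doubled block in Example~3); as written there is a genuine gap.
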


\begin{proof}
Since the gluing of the various copies of $Y$ occurs along incompressible
\sbo s, we only need to show that $Y$ is irreducible. Arguing by contradiction,
let $S$ be an incompressible spherical $2$-\sbo\ of $Y$. Since any incompressible sphere in $|Y|$ meets the singular locus $\Sigma_Y$ in at least four points, $|S|$ is compressible
in $|Y|$, i.e. bounds a $3$-ball $B$. Since $Y\setminus \Sigma_Y$
is irreducible, the only possibility is that $B$ intersects $\Sigma_Y$ in a knotted arc
$\alpha$. One could then extend $\alpha$ to produce a knotted component
of $\Sigma_Y$, contradicting the definition of $Y$.
\end{proof}

We claim that $\calO_2$ is homeomorphic to a connected sum of two
copies of itself. Indeed, let $D_1\subset X$ be a properly embedded
nonsingular $2$-disk
separating $\sigma_l$ from $\sigma_r$. Then $\bord D_1$ bounds a
$2$-disk $D_2\subset Y_0$ intersecting the singular locus transversally
in two points, one on $\sigma_m^1$ and one on $\sigma_m^2$.
The union of $D_1$ and $D_2$ is a football $S$. Splitting $\calO_2$
along $S$ and capping off the corresponding discal $3$-orbifolds,
we get two copies of $\calO_2$.

We now define our manifold $M_2$: by repeatingly applying the
Seifert-van Kampen Theorem~\cite[Corollary 2.3]{bmp}, we see that
the fundamental group of $\calO_2$ has an infinite presentation
with generators $m_1,m_2,\ldots$ and relations $r_1,r_2,\ldots$
as follows: each generator $m_i$ corresponds to a meridian;
for each $i$ one has a relation $m_i^2=1$; all other relations
are of the form $m_i m_j m_k m_l =1$ corresponding to some
boundary component of some copy of $Y$.
Hence there is a well-defined group epimorphism
$\phi:\pi_1\calO_2\to \Zz/2\Zz$ sending each $m_i$ to the generator
of $\Zz/2\Zz$. The kernel of $\phi$ is a torsion free index two subgroup
of $\pi_1\calO_2$. Let $M_2$ be the corresponding covering space of
$\calO_2$ and $p$ be the covering map.

Then $M_2$ is a manifold which is a connected sum
of two copies of itself. Now $N$ is irreducible (Lemma~\ref{N irr},) hence
by~\cite[Theorem~10.1]{sm:seifert}, $p^{-1}(N)$ is irreducible. Since $M_2\setminus
 p^{-1}(N)$ has compact closure, we conclude that any spherical decomposition of
 $M_2$ with only essential spheres would have to be finite.

Next we prove that there is no such spherical decomposition:
let $Z$ be a compact \svar\ of $M_2$ containing all spheres in
a putative spherical decomposition. Let $v$ be a vertex of $\mathcal T$
which is lower than any vertex $v'$ such that $p(Z)\cap Y_{v'}
\neq\emptyset$. There is a football $F
\subset\calO_2$ intersecting $Y_v$ in a disk with two cone points,
one on $\sigma^1_m$ and the other on $\sigma_2^m$, each $Y_{v'}$
in a nonsingular annulus (for $v'$ above $v$) and $X$ in a nonsingular disk.
Then $S:=p^{-1}(F)$ is a $2$-sphere embedded in $M_2$. One can
find a properly embedded line $L\subset M_2$ missing $Z$ and hitting
$S$ transversally in a single point. As before we get a contradiction.



\section{Example 3}

Here is an example of an irreducible open $3$-manifold which contains
infinitely many isotopy classes of incompressible tori, all of which
are canonical, but such that there is no infinite, locally finite
collection of canonical tori.

Let $\calO_3$ be the open $3$-orbifold
obtained by the following modification of the
previous construction, where every singular arc is ``doubled''.

Let $X$ be a $3$-ball with singular locus a trivial $4$-tangle, whose
components are denoted by $\sigma_l,\sigma'_l,\sigma_r,\sigma'_r$.
Let $Y$ be a thrice punctured $3$-sphere with singular locus
consisting of twelve unknotted arcs: four arcs $\sigma_l^1,
\sigma_l^2, {\sigma_l^1}', {\sigma_l^2}'$ connecting $\bord_u Y$ to
$\bord_l Y$, four arcs $\sigma_r^1, \sigma_r^2, {\sigma_r^1}',
{\sigma_r^2}'$ connecting $\bord_u Y$ to $\bord_r Y$, and four arcs
$\sigma_m^1, \sigma_m^2, {\sigma_m^1}', {\sigma_m^2}'$ connecting
$\bord_l Y$ to $\bord_r Y$.

We remark (cf.~\cite{dunbar:hierarchies}) that $X$ and $Y$ are \irr\ and atoroidal,
$Y$ has incompressible boundary,
and the only essential annular $2$-\sbo s in $Y$ are nonsingular annuli
connecting two distinct boundary components.

As before, let $\mathcal T$ be the regular rooted binary tree. To each
vertex $v$ of $\mathcal T$, we assign a copy $Y(v)$ of $Y$.  With the
same notation as in the previous section, we glue $\bord_l Y_u$ to
$\bord_u Y_l$ and $\bord_r Y_u$ $\bord_u Y_r$ so that $\sigma_l^i(Y_u)$
(resp.~${\sigma_l^i}'(Y_u)$) is glued to $\sigma_l^i(Y_l)$
(resp.~${\sigma_l^i}'(Y_l)$), etc.
We again call $N$ the resulting $3$-orbifold with boundary. We
glue in a copy of $X$ so that $\sigma_l(X)$ (resp.~${\sigma_l}'(X)$)
is glued to $\sigma_l^i(Y_0)$ (resp.~${\sigma_l^i}'(Y_0)$), and
$\sigma_r(X)$ (resp.~${\sigma_r}'(X)$) is glued to $\sigma_r^i(Y_0)$
(resp.~${\sigma_r^i}'(Y_0)$.)

We call $\calO_3$ the resulting open $3$-orbifold, and $P$
the union of all boundaries of all $Y(v)$'s.

\begin{prop}
\begin{enumerate}
\item $\calO_3$ is \irr.
\item There are infinitely many incompressible pillows up to isotopy.
\item All incompressible toric $2$-\sbo s in $\calO_3$ are pillows.
Furthermore, they are all canonical, and they all meet the compact
$3$-\sbo\ $X$.
\end{enumerate}
\end{prop}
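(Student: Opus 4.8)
The plan is to mimic the structure of the proof given in Example~2 / Example~3 ideas, adapted from manifold to orbifold and from spheres/footballs to pillows. For part~(i), irreducibility, I would argue exactly as in Lemma~\ref{N irr}: the orbifold $N$ is built by gluing copies of $Y$ along incompressible toric boundary $2$-\sbo s, and $Y$ is \irr\ (stated in the remark citing~\cite{dunbar:hierarchies}); gluing the copy of $X$ in along an incompressible $2$-\sbo\ preserves \irrty. Concretely, given an incompressible spherical $2$-\sbo\ $S$ in $\calO_3$, push it off $P\cup\bord X$ by an innermost-disk/isotopy argument (using that all the gluing $2$-\sbo s are incompressible pillows) so that $S$ lies in a single piece, either a copy of $Y(v)$ or $X$, and then invoke \irrty\ of those pieces; the only subtlety, as in Lemma~\ref{N irr}, is ruling out a discal ball whose intersection with the singular locus is a knotted arc, which is excluded because the singular arcs of $X$ and $Y$ are unknotted and unlinked.

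For part~(ii), I would exhibit the pillows explicitly. For each vertex $v$ of $\mathcal T$ at depth $\ge 1$, the boundary $\bord_u Y(v)$ (equivalently some $\bord_l Y_u$ or $\bord_r Y_u$) is a sphere meeting the singular locus in four points of order~$2$, hence a pillow $\Pi(v)$. I would check these are incompressible (each bounds on both sides a nontrivial orbifold — on one side everything above $v$, on the other side the rest — and neither side is discal, since for instance $Y$ has incompressible boundary) and that $\Pi(v)$, $\Pi(w)$ are non-isotopic for $v\ne w$: an isotopy would have to carry one gluing level to another, but the complementary pieces have different (finite vs.\ infinite) topological type, or one can use a homological/covering-space invariant as in Example~1 to distinguish them. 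This gives infinitely many isotopy classes.

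For part~(iii), the classification, I would take an arbitrary \incomp\ toric $2$-\sbo\ $T$ and put it in general position with respect to $P\cup\bord X$. Using the remark that the only essential annular $2$-\sbo s of $Y$ are nonsingular annuli joining distinct boundary components, and that $X,Y$ are atoroidal, I would run an innermost/outermost argument on the curves $T\cap(P\cup\bord X)$: essential pieces of $T$ in each $Y(v)$ must be such nonsingular annuli, and assembling them forces $T$ to be isotopic to one of the pillows $\Pi(v)$ (or, if $T$ is disjoint from $P\cup\bord X$, it lies in a single atoroidal piece and is compressible, a contradiction); in particular every \incomp\ toric $2$-\sbo\ is a pillow meeting $X$. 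Canonicity then follows because two such pillows $\Pi(v),\Pi(w)$ can always be isotoped off one another — take $v,w$ with, say, $w$ above $v$ or incomparable, and note the corresponding gluing spheres are already disjoint, while any other \incomp\ toric $2$-\sbo\ is isotopic to some $\Pi(u)$ — so no \incomp\ torus can intersect every representative of another's class essentially. I expect the main obstacle to be the bookkeeping in part~(iii): carefully showing that after cut-and-paste the surface $T$ is genuinely \emph{isotopic} (not merely homologous or homotopic) to a single $\Pi(v)$, which requires controlling how the nonsingular-annulus pieces in successive copies of $Y$ fit together and invoking $\bord$-\incomp rigidity of $Y$ to eliminate inessential intersection curves.
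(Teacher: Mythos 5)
There is a decisive error at the heart of your part~(ii), and it propagates into (iii). You take the pillows to be the gluing surfaces $\bord_u Y(v)$. But in Example~3 every singular arc of Example~2 has been \emph{doubled}: $Y$ now has twelve arcs, so each boundary component of $Y(v)$ meets the singular locus in \emph{eight} points of order $2$. Hence $\bord_u Y(v)$ is a sphere with eight cone points, a $2$-suborbifold of Euler characteristic $-2$; it is not a pillow, and not toric at all. (This doubling is precisely the point of the construction: it forces the genuine incompressible pillows to cut across the pieces rather than sit on the decomposition surfaces.) The actual pillows $T(v)$ are built from a disk with four cone points in $Y(v)$ meeting the four middle arcs $\sigma_m^1,\sigma_m^2,{\sigma_m^1}',{\sigma_m^2}'$, a chain of essential nonsingular annuli running up the tree from $v$ to the root, and a nonsingular disk in $X$ capping off at the top. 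This is also why statement~(iii) asserts that every incompressible toric $2$-suborbifold meets $X$ --- your candidate surfaces $\bord_u Y(v)$ do not meet $X$, so your classification in (iii) (``$T$ is isotopic to some $\Pi(v)$'') would contradict the very statement you are proving. In addition, your incompressibility argument (``bounds a nondiscal orbifold on both sides'') is the criterion relevant to spherical $2$-suborbifolds; for a toric $2$-suborbifold such as a pillow, incompressibility means $\pi_1$-injectivity, and this requires real work: the paper cuts $\calO_3$ along a nonsingular annulus $A$, applies Seifert--van Kampen, shows $\pi_1 A$ dies in $\pi_1\calO_3$, and identifies the image of the pillow group as the free product of four $\mathbf{Z}/2$'s modulo the single relation $m_1m_2m_3m_4=1$, into which $\pi_1 T(v)$ injects. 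Distinguishing the isotopy classes is done by linking: for $v\neq v'$ there is a singular line meeting $T(v)$ transversally in one point and missing $T(v')$; your ``finite vs.\ infinite complement'' criterion does not apply here (both sides of a gluing surface have infinite topological type anyway).

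Your parts~(i) and the cut-and-paste skeleton of (iii) are in the right spirit (minimize intersection with $P$, use that the only essential annular $2$-suborbifolds of $Y$ are nonsingular annuli joining distinct boundary components, and that $X$, $Y$ are atoroidal), and this matches the paper's strategy: the lowest piece of $T$ must be a four-cone-point disk meeting the middle arcs, the pieces above it are nonsingular annuli that must exit through $\bord_u$ (otherwise the chain descends the tree forever and never closes up), and one assembles $T\simeq T(v_0)$. But two further gaps remain even there: you never rule out Euclidean turnovers (the paper does so by the parity of intersection with the singular locus), and your treatment of nonsingular tori should be made explicit (the paper places such a torus in a compact $Z$ consisting of $X$ and finitely many $Y(v)$'s and notes that $\pi_1(|Z|\setminus\Sigma_Z)$ is free, forcing compressibility). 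Canonicity then follows, as you intend, from the facts that the $T(v)$'s exhaust all incompressible toric $2$-suborbifolds up to isotopy and can be realized pairwise disjointly --- but that conclusion is only available once the correct pillows have been constructed.
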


\begin{proof}
(i) Suppose there is an essential $2$-\sbo\ $S\subset \calO_3$ with
positive Euler characteristic. Take among all of them one that
intersects $P$ minimally. Since $X$ and $Y$ are \irr, $S\cap P$
cannot be empty.  Let $v$ be a vertex of $\mathcal T$ such that $S$
meets $Y(v)$, but avoids all $Y(v')$'s with $v'$ below $v$. Let $F$ be
a component of $S\cap Y(v)$. By minimality of $\# S\cap P$, $F$ must
be essential. However, $Y$ does not contain any essential $2$-\sbo\ whose
boundary is nonempty and contained in $\bord_u Y(v)$ (this can be seen
by embedding $Y$ into a product orbifold and applying~\cite[Prop.~5]{dunbar:hierarchies}.) This is a contradiction.

(ii) Pick any vertex $v$ of $\mathcal T$. Let $v=v_0,v_1,v_2,
\ldots, v_n$ be a path in $\mathcal T$ connecting $v$ to the ancestor.
We define a pillow $T(v)\subset \calO_3$ as follows:
Let $F\subset Y(v)$ be a disk with four cone points, with
boundary in $\bord_u Y(v)$, and
intersecting each of $\sigma_m^1, \sigma_m^2, {\sigma_m^1}', {\sigma_m^2}'$
exactly once. Define inductively a family $\{A_i\}_{1\le i \le n}$
such that:
\begin{enumerate}
\item $A_i$ is an essential nonsingular annulus in $Y(v_i)$;
\item One boundary component of $A_1$ is $\bord F$;
\item For every $i$, one boundary component of $A_i$ is equal to some
boundary component of $A_{i+1}$;
\item One boundary component of $A_n$ lies in $\bord_u Y(v_n)$.
\end{enumerate}
Finally let $D$ be a disk in $X$ with $\bord D$ equal to the other
boundary component of $A_n$. Then $F\cup A_1 \cup \cdots \cup A_n \cup
D$ is an embedded pillow in $\calO_3$, which we denote by $T(v)$.

The four singular points of $T(v)$ belong to four distinct components $L_1,L_2,L_3,L_4$ of $\Sigma_{\calO_3}$, which are properly embedded lines. There exists a
properly embedded nonsingular annulus $A$ which separates $\calO_3$ in
two components $Z_1,Z_2$ such that $Z_1$ contain the $L_i$'s and is
homeomorphic to the product of a disk with four cone points of order two with
the real line. By the Seifert-van Kampen theorem, $\pi_1\calO_3$ is an amalgamated
product of $\pi_1Z_1$ with $\pi_1 Z_2$ over $\pi_1 A$. The fundamental group
of $Z_1$ can be expressed as the free product of four order two cyclic subgroups generated by meridians $m_1,m_2, m_3,m_4$ of $L_1,L_2,L_3,L_4$ respectively.
Now $\bord_l Y(v)$ intersects $A$ in a circle, which is essential on $A$, but bounds a nonsingular disk in $\bar Z_2$. Hence $\pi_1 A$, has trivial image in $\pi_1 \calO_3$. As a result, the image of $\pi_1Z_1$ in $\pi_1 \calO_3$ is the quotient of $\pi_1Z_1$
by the single relation $m_1 m_2 m_3 m_4 = 1$. This implies that $T(v)$ is
$\pi_1$-injective, hence incompressible.

If $v\neq v'$, then up to exchanging $v$ and $v'$, we can find a line
in $\Sigma_{\calO_3}$ which intersects $T(v)$ transversally in
a single point, and does not meet $T(v')$.
As a consequence, $T(v)$ and $T(v')$ are not isotopic.

(iii) Let $T$ be an incompressible toric $2$-\sbo\ of $\calO_3$. Then
$T$ could be a nonsingular torus, a pillow, or a Euclidean turnover.
Any sphere in $|\calO_3|$ which is transverse to $\Sigma_{\calO_3}$
meets it in an even number of points, so $\calO_3$ does not contain
any turnover. Let $T$ be a nonsingular torus in $\calO_3$. Since $T$ is
compact, there is a compact suborbifold $Z\subset \calO_3$ consisting
of $X$ and finitely many $Y(v)$'s such that $T\subset Z$. The orbifold
$Z$ is homeomorphic to $S^3$ minus a finite union of disjoint balls, with planar
singular locus, so $\pi_1 (|Z| \setminus \Sigma_Z)$ is a free group.
Thus $T$ is compressible in $|Z| \setminus \Sigma_Z$, hence in $\calO_3$.
As a consequence, all incompressible toric $2$-\sbo s of $\calO_3$ are pillows.

Our next goal is to prove that the collection $\{T(v)\}_{v\in\mathcal{T}}$
actually contains \emph{all} \incomp\ pillows up to isotopy.
Indeed, let $T$ be an incompressible pillow. Assume after isotopy that $T$
intersects $P$ minimally. Since $X$ is atoroidal, $T$ meets some
$Y(v)$. Choose $v_0$ so that $T$ meets $Y(v_0)$ and does not meet any
$Y(v)$ with $v$ below $v_0$. Then the argument used to prove
assertion~(i) shows that $T\cap Y(v_0)$ must consist of a disk $D$
intersecting each of $\sigma_m^1, \sigma_m^2, {\sigma_m^1}',
{\sigma_m^2}'$ in exactly one point.  Let $v_1$ be the father of
$v_0$. The intersection of $T$ with $Y(v_1)$ must be an essential
nonsingular annulus $A$, one of whose boundary components is $\bord D$.

The other component of $\bord A$ cannot be on $\bord_l Y(v_1)$ or
$\bord_r Y(v_1)$, for otherwise by carrying on the same argument, we
would get a string of nonsingular annuli going down the tree, and we
would never be able to close up.  Hence the other component of $\bord
A$ lies on $\bord_u Y(v_1)$.  We can repeat the argument until we
arrive at $Y_0$. The upshot is that $T$ is isotopic to
$T(v_0)$. Finally, since the $T(v)$'s can be realized so as to be
pairwise disjoint, it follows that they are all canonical.
\end{proof}

As before, we consider the homomorphism $\phi:\pi_1\calO_3 \to \Zz/2\Zz$
which sends meridians to the generator. The corresponding regular cover
$M_3$ is a good orbifold with torsion-free fundamental group, hence
a manifold. We let $p:M_3\to\calO_3$ denote the covering map, and set
$X':=p^{-1}(X)$, $P':=p^{-1}(P)$, and $Y'(v):=p^{-1}(Y(v))$ for all
$v\in\mathcal T$.

\begin{prop}\label{M3}
\begin{enumerate}
\item $M_3$ is \irr;
\item There is an infinite collection $\{T'(v)\}_{v\in\mathcal{T}}$ of
pairwise nonisotopic canonical tori, all of which essentially intersect $X'$.
\end{enumerate}
\end{prop}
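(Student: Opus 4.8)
The plan is to transfer the properties of $\calO_3$ established in the previous proposition up to the double cover $M_3$, using the same type of homological obstruction that appeared in Examples 1 and 2. For part (i), I would argue that since $N$ is irreducible as an orbifold (the argument of Lemma~\ref{N irr} goes through verbatim for the ``doubled'' $Y$, since $Y$ is still irreducible by \cite{dunbar:hierarchies}), the preimage $p^{-1}(N)$ is an irreducible manifold by \cite[Theorem~10.1]{sm:seifert}. Any essential sphere in $M_3$ would then have to meet $X' = p^{-1}(X)$, which is compact; so a putative spherical decomposition with only essential spheres would be finite and contained in a compact set. One then produces a properly embedded line $L\subset M_3$ (lifting a line built from a nonsingular disk in $X$ extended by nonsingular annuli down some branch of $\mathcal T$, exactly as in the $M_2$ argument) that misses the compact set but hits a sphere $S = p^{-1}(\text{football})$ transversally in one point, contradicting the fact that $[S]$ is a sum of classes of decomposition spheres, each of which has zero intersection number with $L$.

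For part (ii), the natural candidates are the preimages $T'(v) := p^{-1}(T(v))$ of the pillows $T(v)\subset\calO_3$ constructed in part (ii) of the previous proposition. Since each $T(v)$ is a pillow (a sphere with four order-two cone points) and the covering $p$ is the one killing meridians, each $T'(v)$ should be a torus: the preimage of a pillow under this particular $\Zz/2\Zz$-cover is a connected double cover of the sphere branched over the four cone points, i.e.\ a torus. I would verify connectedness by noting that the composite $\pi_1(T(v)\setminus\Sigma)\to\pi_1\calO_3\xrightarrow{\phi}\Zz/2\Zz$ is surjective (a meridian of one of the $L_i$ maps to the generator). Incompressibility of $T'(v)$ follows because $T(v)$ is incompressible in $\calO_3$ and incompressibility lifts to covers; canonicity follows because canonicity of toric suborbifolds is preserved under passing to covers, or more directly because the $T'(v)$ can be realized pairwise disjointly (being preimages of pairwise disjoint pillows), and by part (iii) of the previous proposition every incompressible toric suborbifold downstairs is one of the $T(w)$'s, which are all homotopically disjoint from $T(v)$.

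It remains to show the $T'(v)$ are pairwise nonisotopic and that each essentially meets $X'$. For nonisotopy, I would push up the argument from downstairs: if $v\neq v'$ there is a line $\ell$ in $\Sigma_{\calO_3}$ meeting $T(v)$ once and missing $T(v')$; its preimage $p^{-1}(\ell)$ is a properly embedded line (or pair of lines) in $M_3$ whose total intersection number with $T'(v)$ is odd and with $T'(v')$ is zero, so they are not isotopic. For the essential intersection with $X'$: part (iii) downstairs says every torus isotopic to $T(v)$ meets $X$; if some $T'(v)$ could be isotoped off $X'$, projecting the isotopy would give an isotopy of $T(v)$ off $X$ (possibly only a homotopy, but one can upgrade using irreducibility and incompressibility), a contradiction. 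The main obstacle I anticipate is the last point — carefully establishing that $T'(v)$ cannot be isotoped away from the compact set $X'$, i.e.\ ruling out that the extra room in the cover allows an isotopy with no downstairs analogue; here I would lean on the fact that an isotopy of $T'(v)$ missing $X'$ stays in the irreducible piece $p^{-1}(N)$, together with the classification of essential annuli in $Y$ from \cite{dunbar:hierarchies}, to force any such isotopy to descend or to contradict minimality of intersection with $P'$.
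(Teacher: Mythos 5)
Your part (i) is aimed at the wrong statement and would not prove irreducibility. The argument you sketch (essential spheres must meet the compact set $X'$, a putative spherical decomposition by essential spheres is finite, contradiction via a line hitting a lifted football once) is the $M_2$ argument, and what it rules out is a \emph{spherical decomposition} consisting of essential spheres --- it does not rule out the existence of a single essential sphere, as $M_2$ itself shows (it has essential spheres but no such decomposition). Moreover, in the doubled construction $\calO_3$ there is no football to lift: the disk in $X$ now meets four singular arcs, so the corresponding $2$-suborbifold is a pillow and its preimage is a torus, not a sphere. The paper's route is much shorter and is the correct one: irreducibility of the \emph{orbifold} $\calO_3$ was already proved in the previous proposition, and \cite[Theorem~10.1]{sm:seifert} applied to the cover $M_3\to\calO_3$ (not merely to $p^{-1}(N)$) gives irreducibility of $M_3$ at once.

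In part (ii) the crux is canonicity of $T'(v)$, and this is exactly where your argument has a gap. Canonicity in $M_3$ requires that \emph{every} incompressible torus in $M_3$ be homotopically disjoint from $T'(v)$; but an incompressible torus (or a compressing/essential annulus) upstairs need not be invariant under the deck involution, need not project to a suborbifold of $\calO_3$, and so is not controlled by the downstairs classification ``all incompressible toric $2$-suborbifolds of $\calO_3$ are the $T(w)$'s.'' For the same reason, ``canonicity is preserved under passing to covers'' is precisely the unproved assertion, not a usable lemma. The paper deals with this by reducing canonicity to the statement that every incompressible annulus properly embedded in $M_3$ cut along $T'(v_0)$ is parallel into $T'(v_0)$, and proving it with a least-area argument: Lemma~\ref{no annulus} (every essential annulus in $Y'(v)$ joins two distinct components of $\bord Y'(v)$) is established by taking a least-area representative, comparing it with its translate under the deck transformation, and using exchange/roundoff as in \cite{fhs,sm:seifert}; a similar argument excludes essential annuli in $X'\cap Z$. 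Your closing paragraph gestures at minimal intersection with $P'$ and the annulus classification in $Y$, but without this equivariant least-area step (or an equivalent device) the non-equivariant annuli and tori upstairs are not excluded, and the ``project the isotopy downstairs and upgrade the homotopy'' step for the essential intersection with $X'$ is likewise unjustified as stated. Your construction of $T'(v)$, its connectedness, incompressibility (equivariant Dehn lemma), and the pairwise nonisotopy via odd intersection number with a lifted singular line are fine and parallel to the paper.
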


\begin{proof}
(i) follows from irreducibility of $\calO_3$ and~\cite[Theorem~10.1]
{sm:seifert}.

To establish (ii), we define $T'(v)=p^{-1}(T(v))$. This gives us an
infinite collection of tori in $M_3$. If two of them were isotopic,
then they would be homologous, and so would be the underlying spaces
of their projections to $\calO_3$, which is not the case. By the
equivariant Dehn Lemma, each $T'_v$ is \incomp.

Fix $v\in\mathcal T$. To show that $T'(v)$ is canonical, it suffices to prove
that every incompressible annulus properly embedded in $M_3$ cut along
$T'(v)$ is parallel to an annulus in $T'(v)$. We hence need to deduce
this from the corresponding property for $T(v)\subset\calO_3$. This can
probably done by elementary topology, but we prefer to use a direct
minimal surface argument suggested by Peter Scott. Notice first that
for all $v,v'$, the pairs $(M_3,T'(v))$ and $(M_3,T'(v'))$
are homeomorphic. Hence we only give the proof when $v$ is the ancestor $v_0$.

Let $Z$ be a component of $M_3$ cut along $T'(v_0)$, and let $A$ be an
incompressible annulus properly embedded in $Z$. Assume that
$A$ has been isotoped so as to intersect $P'$ minimally.

If $A$ intersects $P'$ at all, then for some $v$ one component of $A\cap
Y'(v)$ is an essential annulus whose boundary is contained in a single
component of $\bord Y'(v)$. However, we have the following lemma:
\begin{lem}\label{no annulus}
Every essential annulus in $Y'(v)$ meets two different components of
$\bord Y'(v)$.
\end{lem}

\begin{proof}
To shorten notation set $Y:=Y(v)$ and $Y':=Y'(v)$. Put a sufficiently
convex Riemannian metric on $Y$ and give $Y'$ the lifted metric.
Assume the lemma is false, so that there is a component $U$ of $\bord Y'$
and an essential, properly embedded annulus $A\subset Y'$ with
both boundary components in
$U$. By~\cite{fhs}, one finds an annulus $A_0$ of least area with this property.
Let $A'_0$ be the translate of $A_0$ by the deck transformation group
of the double cover $p:Y'\to Y$. If $A_0=A'_0$ or $A_0\cap A'_0=
\emptyset$, then $p(A_0)$ is a properly embedded, essential, annular
$2$-\sbo\ of $Y$ with both boundary components in $p(U)$. This is
impossible.

So generically $A_0$ and $A'_0$ intersect in a finite family of curves
and arcs. By standard exchange/roundoff arguments
(cf.~\cite{fhs,sm:seifert}) one obtains a contradiction. This proves
Lemma~\ref{no annulus}.
\end{proof}

We return to the proof of Proposition~\ref{M3}. By Lemma~\ref{no annulus},
our annulus $A$, does not intersect $P'$. Hence it is contained
in $X'\cap Z$. But this manifold does not contain any essential
annulus with both boundary components in $X'\cap T'_{v_0}$, by an argument
entirely similar to that used in the proof of Lemma~\ref{no annulus}.
This contradiction completes the proof of Proposition~\ref{M3}.
\end{proof}

\section{Just another brick in the wall}

Let $\calO_4$ be the orbifold whose underlying space is $\Rr^3$ and whose singular locus is the trivalent graph shown in Figure~\ref{fig:quatre}, where all edges should be
labeled with the number 2. 

\bigskip

\begin{figure}[ht]
\begin{center}
\setlength{\unitlength}{0.00083333in}
\begingroup\makeatletter\ifx\SetFigFont\undefined%
\gdef\SetFigFont#1#2#3#4#5{%
  \reset@font\fontsize{#1}{#2pt}%
  \fontfamily{#3}\fontseries{#4}\fontshape{#5}%
  \selectfont}%
\fi\endgroup%
{\renewcommand{\dashlinestretch}{30}
\begin{picture}(6024,1239)(0,-10)
\path(837,1212)(2637,1212)(2637,12)(837,12)
\dottedline{45}(537,1212)(12,1212)
\dottedline{45}(537,12)(12,12)
\dottedline{45}(537,816)(12,816)
\dottedline{45}(537,420)(12,420)
\path(5187,12)(3387,12)(3387,1212)(5187,1212)
\dottedline{45}(5487,12)(6012,12)
\dottedline{45}(5487,1212)(6012,1212)
\dottedline{45}(5487,408)(6012,408)
\dottedline{45}(5487,804)(6012,804)
\path(837,816)(2637,816)
\path(837,420)(2637,420)
\path(3387,420)(5187,420)
\path(3387,816)(5187,816)
\path(1962,1212)(1962,816)
\path(1362,1212)(1362,816)
\path(4587,1212)(4587,816)
\path(3987,1212)(3987,816)
\path(2337,816)(2337,420)
\path(1662,816)(1662,420)
\path(1137,816)(1137,420)
\path(3687,816)(3687,420)
\path(4287,816)(4287,420)
\path(4887,816)(4887,420)
\path(1962,420)(1962,12)
\path(1362,420)(1362,12)
\path(3987,420)(3987,12)
\path(4587,420)(4587,12)
\end{picture}
}
\end{center}
\caption {\label{fig:quatre} The singular locus of $\calO_4$}
\end{figure}

A key property of the graph $\Sigma_{\calO_4}$ is that it is planar. In particular,
every properly embedded line in $\Sigma_{\calO_4}$ is unknotted as a subset
of $\Rr^3$. We choose an unknotted arc $\alpha$ connecting the two components of $\Sigma_{\calO_4}$ as in Figure~\ref{fig:quatreplus}.

\begin{prop}
\begin{enumerate}
\item $\calO_4$ is \irr.
\item All \incomp\ toric $2$-\sbo s are pillows, and intersect $\alpha$.
\item There are infinitely many isotopy classes of canonical pillows.
\end{enumerate}
\end{prop}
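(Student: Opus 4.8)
The plan is to follow closely the template established for $\calO_3$ in the previous section, since $\calO_4$ is built from the same local pieces (balls with trivial tangles, thrice-punctured spheres with unknotted arcs, all local groups $\Zz/2\Zz$) arranged along a graph rather than a binary tree, and the key structural facts from \cite{dunbar:hierarchies} about irreducibility, atoroidality, and the classification of essential annuli in these pieces remain available. First I would fix notation: decompose $\calO_4$ along the obvious family $P$ of toric $2$-\sbo s into the elementary blocks $X$ (a ball with a trivial $4$-tangle) and $Y$ (a thrice-punctured sphere with twelve unknotted arcs), exactly as in Example~3; the graph $\Sigma_{\calO_4}$ being a disjoint union of two properly embedded lines, $\calO_4$ is exhausted by compact \sbo s each homeomorphic to $S^3$ minus finitely many balls with planar singular locus.

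For assertion (i), I would argue by contradiction with an essential spherical $2$-\sbo\ $S$ of positive Euler \char, chosen to meet $P$ minimally. Since the blocks $X$ and $Y$ are \irr, $S\cap P\neq\emptyset$; picking a block $Y(v)$ meeting $S$ but no block ``beyond'' it (using the tree-like, or at least exhaustible, structure of the decomposition graph) and a component $F$ of $S\cap Y(v)$, minimality forces $F$ to be essential in $Y(v)$ with boundary in a single component of $\bord Y(v)$, which $Y$ does not contain (again by embedding $Y$ in a product \orbi\ and invoking \cite[Prop.~5]{dunbar:hierarchies}). This contradiction gives \irrty.

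For (ii), the argument that every \incomp\ toric $2$-\sbo\ is a pillow is identical to the one in the proof of the previous proposition: Euclidean turnovers are excluded because any sphere transverse to $\Sigma_{\calO_4}$ meets it in an even number of points, and a nonsingular torus is compressible because it lives in a compact \sbo\ whose nonsingular part has free fundamental group. To see that an \incomp\ pillow $T$ must intersect $\alpha$, I would use that $\alpha$ is an unknotted arc joining the two line components of $\Sigma_{\calO_4}$: if $T$ missed $\alpha$, push $T$ off $\alpha$ and cap, obtaining a pillow in the complement of a neighborhood of $\alpha$; but $\calO_4\setminus\alpha$ deformation retracts onto a piece in which the four singular strands through $T$ no longer ``link across'' the two sides, and one checks as in the $T(v)$ computation of Example~3 that the corresponding $\pi_1$ is a free product of $\Zz/2\Zz$'s in which the pillow cannot be $\pi_1$-injective — i.e.\ the planarity of $\Sigma_{\calO_4}$ plus the unknottedness of $\alpha$ prevents an \incomp\ pillow from avoiding $\alpha$.

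Assertion (iii) is then the direct analogue of part (ii) of the previous proposition: for each vertex $v$ of the decomposition graph I would build a pillow $T(v)$ as the union of a four-cone-point disk in $Y(v)$, a chain of essential nonsingular annuli running back through intermediate blocks, and a capping disk in $X$; the Seifert--van Kampen computation showing $\pi_1Z_1=\Zz/2 * \Zz/2 * \Zz/2 * \Zz/2$ modulo the single relation $m_1m_2m_3m_4=1$, hence $T(v)$ is $\pi_1$-injective and \incomp, carries over verbatim, and distinct $v,v'$ are distinguished by a line in $\Sigma_{\calO_4}$ hitting $T(v)$ once and missing $T(v')$, so the $T(v)$ are pairwise non-isotopic. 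I expect the main obstacle to be (ii): verifying cleanly that \emph{every} \incomp\ pillow meets $\alpha$ — and more generally that there are no \incomp\ toric \sbo s hiding in a way not accounted for by the block decomposition — requires the normal-form/minimality argument to be run carefully against the specific combinatorics of the graph in Figure~\ref{fig:quatre}, where (unlike the clean binary tree of $\calO_3$) one must rule out annuli that close up within the graph without descending to a leaf; this is where the planarity hypothesis and the explicit choice of $\alpha$ really have to be used.
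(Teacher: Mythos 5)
Your proposal is built on a structural premise that is false for $\calO_4$: this orbifold is \emph{not} assembled from the blocks $X$ and $Y$ of Example~3, and its singular locus is not ``a disjoint union of two properly embedded lines'' --- it is a pair of trivalent ``brick wall'' graphs, with genuine vertices (local groups $\Zz/2\times\Zz/2$), whereas in $\calO_3$ the singular locus is a union of disjoint lines with only cyclic local groups. Consequently the whole machinery you import --- the decomposition along a family $P$ of toric \sbo s into $X$- and $Y$-pieces, the appeal to the annulus classification in those pieces, the minimal-intersection argument for (i) --- has no counterpart here. The paper's proof of (i) is instead direct and uses exactly the two features you only mention in passing: the underlying space is $\Rr^3$, so a spherical $2$-\sbo\ bounds a ball $B$, and planarity of $\Sigma_{\calO_4}$ forces $B\cap\Sigma_{\calO_4}$ to be an unknotted arc (two intersection points) or an unknotted Y-shaped graph (three intersection points), so the \sbo\ is \comp. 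Note the three-point case: because $\Sigma_{\calO_4}$ has trivalent vertices, a sphere can meet it in an \emph{odd} number of points, which also kills your proposed turnover exclusion in (ii) (``any transverse sphere meets $\Sigma_{\calO_4}$ in an even number of points'' is simply false here); the correct reason, used in the paper, is that Euclidean turnovers must carry a cone point of order greater than $2$, while every label of $\calO_4$ equals $2$. Your argument that an \incomp\ pillow must meet $\alpha$ is likewise not an argument: ``one checks as in the $T(v)$ computation'' refers to a computation in an orbifold with a different singular structure; the paper shows concretely that a pillow missing $\alpha$ meets $\Sigma_{\calO_4}$ in two points on each of two edges and bounds a solid pillow, hence is \comp.

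The most serious gap is in (iii): you construct (modulo the nonexistent block structure) infinitely many pairwise non-isotopic \incomp\ pillows, but you never address \emph{canonicity}, which is the entire content of the assertion. In Example~3 canonicity came from classifying all \incomp\ pillows up to isotopy and realizing them disjointly; that route is closed for $\calO_4$, which contains infinitely many \emph{noncanonical} \incomp\ pillows (see the Remark following the proposition). The paper instead exhibits the explicit pillows $P_1,P_2,\ldots$ of Figure~\ref{fig:quatreplus} and proves each is canonical by a local analysis: a pillow meeting $P_1$ essentially would have to intersect each of the two sides of $P_1$ in essential annular $2$-\sbo s (disks with two cone points), and the only such annular \sbo s have boundary curves that cannot be matched to close up into a pillow. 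Without an argument of this type, assertion (iii) remains unproved even if one grants your construction of infinitely many \incomp\ pillows.
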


\begin{proof}
(i) Let $S$ be a general position $2$-sphere in $\Rr^3$. Let $B\subset \Rr^3$ be the $3$-ball bounded by $S$. If $S$ avoids  $\Sigma_{\calO_4}$ , then so does $B$. Otherwise $S$ intersects $\Sigma_{\calO_4}$  in at least two points. If $\# S \cap \Sigma_{\calO_4} =2$, then the two intersection points lie on the same edge of $\Sigma_{\calO_4}$ and $B$ intersects $\Sigma_{\calO_4}$  in an arc. This arc cannot be knotted inside $B$ since it extends to a properly embedded singular line in $\Sigma_{\calO_4}$, which cannot be knotted in $\Rr^3$ as remarked above.  If  $\# S \cap \Sigma_{\calO_4} =3$, then $B$ intersects  $\Sigma_{\calO_4}$ in a Y-shaped graph,
which must be unknotted for a similar reason. In each case, the $2$-\sbo\ whose underlying sphere is $S$ is compressible.

(ii) Recall (cf.~\cite[Chapter~2]{bmp}) that all Euclidean turnovers have at least a cone point of order different from $2$. Hence $\calO_4$ contains no Euclidean \turn s.
Any nonsingular torus $T\subset\calO_4$ lies in a $3$-ball intersecting $\Sigma_{\calO_4}$
in a planar graph. Hence one can find a handlebody in $|\calO_4\setminus \Sigma_{\calO_4} |$ containing $T$. This shows that $T$ is compressible in $|\calO_4\setminus \Sigma_{\calO_4} |$, hence \emph{a fortiori} in $\calO_4$.

Let $P$ be a pillow in $\calO_4$. If $P$ misses $\alpha$, then one can
find two edges $e_1,e_2$ of $\Sigma_{\calO_4}$ such that $P \cap \Sigma_{\calO_4}$ consists of two points of $e_1$ and two points of $e_2$. Thus the compact $3$-\sbo\ whose boundary is a $3$-ball $P$ is  a solid pillow, which implies that $P$ is \comp.

\begin{figure}[ht]
\begin{center}
\setlength{\unitlength}{0.00083333in}
\begingroup\makeatletter\ifx\SetFigFont\undefined%
\gdef\SetFigFont#1#2#3#4#5{%
  \reset@font\fontsize{#1}{#2pt}%
  \fontfamily{#3}\fontseries{#4}\fontshape{#5}%
  \selectfont}%
\fi\endgroup%
{\renewcommand{\dashlinestretch}{30}
\begin{picture}(6024,2505)(0,-10)
\put(4024,1245){\ellipse{2024}{2474}}
\put(3799,1245){\ellipse{1274}{2024}}
\path(837,1882)(2637,1882)(2637,682)(837,682)
\dottedline{45}(537,1882)(12,1882)
\dottedline{45}(537,682)(12,682)
\dottedline{45}(537,1486)(12,1486)
\dottedline{45}(537,1090)(12,1090)
\path(5187,682)(3387,682)(3387,1882)(5187,1882)
\dottedline{45}(5487,682)(6012,682)
\dottedline{45}(5487,1882)(6012,1882)
\dottedline{45}(5487,1078)(6012,1078)
\dottedline{45}(5487,1474)(6012,1474)
\path(837,1486)(2637,1486)
\path(837,1090)(2637,1090)
\path(3387,1090)(5187,1090)
\path(3387,1486)(5187,1486)
\path(1962,1882)(1962,1486)
\path(1362,1882)(1362,1486)
\path(4587,1882)(4587,1486)
\path(3987,1882)(3987,1486)
\path(2337,1486)(2337,1090)
\path(1662,1486)(1662,1090)
\path(1137,1486)(1137,1090)
\path(3687,1486)(3687,1090)
\path(4287,1486)(4287,1090)
\path(4887,1486)(4887,1090)
\path(1962,1090)(1962,682)
\path(1362,1090)(1362,682)
\path(3987,1090)(3987,682)
\path(4587,1090)(4587,682)
\dashline{60.000}(4362,1732)(4361,1731)(4358,1728)
	(4353,1723)(4346,1716)(4337,1706)
	(4325,1693)(4312,1677)(4296,1660)
	(4280,1641)(4264,1619)(4247,1597)
	(4230,1573)(4214,1549)(4199,1522)
	(4185,1494)(4172,1465)(4161,1433)
	(4151,1399)(4144,1362)(4139,1323)
	(4137,1282)(4139,1241)(4144,1202)
	(4151,1165)(4161,1131)(4172,1099)
	(4185,1070)(4199,1042)(4214,1015)
	(4230,991)(4247,967)(4264,944)
	(4280,923)(4296,904)(4312,887)
	(4325,871)(4337,858)(4346,848)
	(4353,841)(4358,836)(4361,833)(4362,832)
\dashline{60.000}(4062,2182)(4064,2181)(4067,2180)
	(4074,2178)(4084,2175)(4098,2170)
	(4117,2163)(4139,2155)(4166,2146)
	(4197,2135)(4231,2122)(4268,2109)
	(4306,2094)(4347,2079)(4388,2062)
	(4429,2045)(4471,2028)(4512,2010)
	(4553,1991)(4593,1972)(4631,1952)
	(4669,1931)(4704,1909)(4739,1887)
	(4771,1863)(4801,1839)(4828,1813)
	(4852,1787)(4872,1760)(4887,1732)
	(4898,1701)(4901,1671)(4899,1642)
	(4891,1616)(4878,1591)(4861,1568)
	(4841,1547)(4817,1527)(4791,1509)
	(4763,1491)(4733,1474)(4701,1459)
	(4669,1444)(4636,1430)(4604,1417)
	(4573,1405)(4544,1394)(4518,1384)
	(4495,1376)(4476,1370)(4461,1365)
	(4450,1361)(4443,1359)(4439,1358)(4437,1357)
\dashline{60.000}(4082,345)(4084,346)(4087,347)
	(4094,349)(4104,352)(4118,357)
	(4137,364)(4159,372)(4186,381)
	(4217,392)(4251,405)(4288,418)
	(4326,433)(4367,448)(4408,465)
	(4449,482)(4491,499)(4532,517)
	(4573,536)(4613,555)(4651,575)
	(4689,596)(4724,618)(4759,640)
	(4791,664)(4821,688)(4848,714)
	(4872,740)(4892,767)(4907,795)
	(4918,826)(4921,856)(4919,885)
	(4911,911)(4898,936)(4881,959)
	(4861,980)(4837,1000)(4811,1018)
	(4783,1036)(4753,1053)(4721,1068)
	(4689,1083)(4656,1097)(4624,1110)
	(4593,1122)(4564,1133)(4538,1143)
	(4515,1151)(4496,1157)(4481,1162)
	(4470,1166)(4463,1168)(4459,1169)(4457,1170)
\dottedline{45}(2637,682)(2639,681)(2643,679)
	(2650,675)(2661,669)(2676,661)
	(2694,651)(2715,640)(2738,629)
	(2764,616)(2790,604)(2818,592)
	(2847,581)(2877,570)(2908,560)
	(2941,551)(2975,543)(3012,537)
	(3049,533)(3087,532)(3123,533)
	(3157,537)(3187,543)(3214,551)
	(3238,560)(3259,570)(3278,581)
	(3296,592)(3312,604)(3326,616)
	(3339,629)(3351,640)(3361,651)
	(3369,661)(3376,669)(3381,675)
	(3384,679)(3386,681)(3387,682)
\put(2712,382){\makebox(0,0)[lb]{{\SetFigFont{12}{14.4}{\rmdefault}{\mddefault}{\updefault}$\alpha$}}}
\put(4737,2257){\makebox(0,0)[lb]{{\SetFigFont{12}{14.4}{\rmdefault}{\mddefault}{\updefault}$P_2$}}}
\put(3687,1957){\makebox(0,0)[lb]{{\SetFigFont{12}{14.4}{\rmdefault}{\mddefault}{\updefault}$P_1$}}}
\end{picture}
}
\end{center}
\caption {\label{fig:quatreplus} Two canonical pillows in $\calO_4$}
\end{figure}

(iii) Let $P_1$ be the pillow depicted on
Figure~\ref{fig:quatreplus}. Let $X_1,X_2$ be the $3$-\sbo s bounded
by $P_1$. Then any $2$-disk properly embedded in $\vert X_1\vert$ or
$\vert X_2\vert$ intersects $\Sigma_{\calO_4}$ in at least two points
unless it is parallel rel $\Sigma_{\calO_4}$ to some disk in
$P_1$. Hence $P_1$ is \incomp. If there were an \incomp\ pillow $P_2$
meeting $P_1$ essentially, then after isotopy $X_1\cap P_2$ and
$X_2\cap P_2$ would consist of essential annular $2$-\orbi s with
underlying space a $2$-disk and two singular points. Now the only
such annular \sbo s are shown on Figure~\ref{fig:quatreplus}; their boundaries are
not isotopic, hence they cannot be glued together to give a pillow that would intersect
$P_1$ essentially. This shows that $P_1$ is canonical.

The same argument works for the pillow $P_2$ on the same figure. It is
easy to see that one can find in this way an infinite family of
canonical pillows $P_1,P_2,P_3,\ldots$, where $P_{n+1}$ is separated
from $P_n$ by three vertical bars.
\end{proof}

\begin{rem}
This orbifold $\calO_4$ also contains noncanonical \incomp\ pillows. For instance,
let $X$ be the $3$-suborbifold bounded by $P_1\cup P_2$ and observe that $P_2$ can be obtained from $P_1$ by moving to the right and `crossing'
three vertical arcs in $\Sigma_{\calO_4}$. If one `crosses' only one of these arcs,
one obtains another incompressible pillow, consisting of an annular
suborbifold of $P_1$ together with one of the dotted annular suborbifolds in $X$.
There are two such pillows. With a little more work, one can show that $X$
has a Seifert fibration where those two pillows are vertical, and their
projections to the base orbifold intersect essentially. Hence by~\cite{bs:tori} those
two pillows intersect essentially. In particular, they are noncanonical.
\end{rem}

\section{Jacob's nightmare}

In order to motivate the example $\calO_5$ constructed later in this section, we first consider an example of a $3$-orbifold which we do not view as pathological. Its underlying space is $\Rr^3$ and its singular locus is as in Figure~\ref{fig:jacob},
all meridians having order $2$. It consists of two connected trivalent graphs looking like bi-infinite ladders (which we shall call `Jacob ladders'.)

\begin{figure}[ht]
\begin{center}
\setlength{\unitlength}{0.00083333in}
\begingroup\makeatletter\ifx\SetFigFont\undefined%
\gdef\SetFigFont#1#2#3#4#5{%
  \reset@font\fontsize{#1}{#2pt}%
  \fontfamily{#3}\fontseries{#4}\fontshape{#5}%
  \selectfont}%
\fi\endgroup%
{\renewcommand{\dashlinestretch}{30}
\begin{picture}(2349,3714)(0,-10)
\path(12,3087)(12,687)
\path(762,3087)(762,687)
\path(12,2637)(762,2637)
\path(12,2112)(762,2112)
\path(12,1587)(762,1587)
\path(12,1062)(762,1062)
\dottedline{45}(12,3312)(12,3687)
\dottedline{45}(762,3312)(762,3687)
\dottedline{45}(12,12)(12,387)
\dottedline{45}(762,12)(762,387)
\path(1587,3087)(1587,687)
\path(2337,3087)(2337,687)
\path(1587,2637)(2337,2637)
\path(1587,2112)(2337,2112)
\path(1587,1587)(2337,1587)
\path(1587,1062)(2337,1062)
\dottedline{45}(1587,3312)(1587,3687)
\dottedline{45}(2337,3312)(2337,3687)
\dottedline{45}(1587,12)(1587,387)
\dottedline{45}(2337,12)(2337,387)
\end{picture}
}
\end{center}
\caption {\label{fig:jacob} An orbifold with two Jacob ladders as
singular locus}
\end{figure}

The same arguments as for $\calO_4$ show that this orbifold is \irr, and that its only incompressible toric $2$-\sbo s are pillows. Furthermore, around each ladder one can find a properly embedded bi-infinite annulus that bounds a Seifert $3$-\sbo\ $U$: one chooses a vertical band $Z$ containing the ladder and foliates $|Z|$ by
intervals such that the rungs are leafs. Then those intervals, viewed as $1$-\sbo s,
are mirrored intervals, which are fibers of the Seifert fibration on $U$, the other
fibers being circles wrapped around $Z$ (cf.~\cite[p.~33]{bmp}.) Thus the orbifold under consideration has a natural `JSJ splitting' consisting of two annuli.

Of course there could be more ladders, or even infinitely many of them. This we still do not consider pathological, since the corresponding infinite collection of annuli would be locally finite.

However the $3$-\orbi\ $\calO_5$ in Figure~\ref{fig:jacob} fails to
have such a decomposition. Its underlying space is again $\Rr^3$, and
its singular locus consists of an infinite sequence of Jacob ladders
plus another component $J$. The precise shape of $J$ is unimportant;
the only relevant feature is that it should not create unwanted \incomp\ $2$-\sbo s.
For example, we can take it to be a planar `brickwall' graph similar to
each component of $\Sigma_{\calO_4}$, also with all labels equal to $2$,
but with \emph{five} horizontal half-lines
instead of four, so that there is an unknotted, properly embedded, nonsingular
plane in $\calO_5$ separating $J$ from the rest of the singular locus, and having the
property that
the component containing $J$ is irreducible and does not contain any toric $2$-\sbo.

Most important is the
position of $J$ with respect to the ladders: any finite set of ladders can be separated
from $J$ by a properly embedded nonsingular plane, but there is an arc $\alpha$
(see Figure~\ref{fig:jacob}) connecting $J$ to $L_0$, and which can be
extended to a properly embedded line $\Lambda\subset\Rr^3$ by adding
a half-line in $J$ and a half-line in $L_0$, such that for all
negative $n$, one can find a circle $Q_n\subset L_n$ (consisting of four arcs:
a subarc of each upright plus two rungs connecting them) such that the linking number
of $Q_n$ and $\Lambda$ is $1$.

Also, any two ladders are separated by a properly embedded nonsingular plane.
For each
ladder we assign to each rung an integer so that going up the ladder
corresponds to increasing the numbers.

\begin{figure}[ht]
\begin{center}
\setlength{\unitlength}{0.00083333in}
\begingroup\makeatletter\ifx\SetFigFont\undefined%
\gdef\SetFigFont#1#2#3#4#5{%
  \reset@font\fontsize{#1}{#2pt}%
  \fontfamily{#3}\fontseries{#4}\fontshape{#5}%
  \selectfont}%
\fi\endgroup%
{\renewcommand{\dashlinestretch}{30}
\begin{picture}(6549,3714)(0,-10)
\path(2712,3087)(2712,687)
\path(3462,3087)(3462,687)
\dottedline{45}(2712,3312)(2712,3687)
\dottedline{45}(3462,3312)(3462,3687)
\dottedline{45}(2712,12)(2712,387)
\dottedline{45}(3462,12)(3462,387)
\path(2716,2801)(3466,2576)
\path(2695,1382)(3445,1157)
\path(4137,3087)(4137,687)
\path(4887,3087)(4887,687)
\dottedline{45}(4137,3312)(4137,3687)
\dottedline{45}(4887,3312)(4887,3687)
\dottedline{45}(4137,12)(4137,387)
\dottedline{45}(4887,12)(4887,387)
\path(4126,1328)(4876,1103)
\path(2112,3087)(2112,687)
\dottedline{45}(1362,3312)(1362,3687)
\dottedline{45}(2112,3312)(2112,3687)
\dottedline{45}(1362,12)(1362,387)
\dottedline{45}(2112,12)(2112,387)
\path(1366,2801)(2116,2576)
\path(1345,1382)(2095,1157)
\path(1362,2412)(1362,3087)
\path(1362,1587)(1362,687)
\dottedline{45}(1362,2412)(1362,1587)
\dottedline{45}(912,3012)(12,3012)
\dottedline{45}(6462,2937)(5562,2937)
\dottedline{45}(987,387)(87,387)
\dottedline{45}(6537,387)(5637,387)
\path(4137,2787)(4887,2562)
\path(806,2382)(1818,2382)(1818,1662)(806,1662)
\path(806,2202)(1818,2202)
\path(806,2022)(1818,2022)
\path(806,1842)(1818,1842)
\dottedline{45}(282,2382)(642,2382)
\dottedline{45}(282,2202)(642,2202)
\dottedline{45}(282,2022)(642,2022)
\dottedline{45}(282,1842)(642,1842)
\dottedline{45}(282,1662)(642,1662)
\dottedline{45}(1812,1662)(1814,1661)(1817,1659)
	(1824,1656)(1833,1650)(1847,1643)
	(1863,1635)(1883,1624)(1906,1613)
	(1931,1601)(1957,1589)(1985,1577)
	(2014,1565)(2045,1554)(2076,1544)
	(2108,1534)(2142,1526)(2178,1519)
	(2216,1514)(2255,1511)(2296,1510)
	(2337,1512)(2381,1517)(2422,1526)
	(2458,1537)(2491,1549)(2521,1563)
	(2548,1578)(2572,1594)(2594,1611)
	(2614,1628)(2632,1646)(2649,1663)
	(2664,1679)(2678,1694)(2689,1708)
	(2698,1719)(2704,1727)(2709,1732)
	(2711,1736)(2712,1737)
\put(2262,1662){\makebox(0,0)[lb]{{\SetFigFont{12}{14.4}{\rmdefault}{\mddefault}{\updefault}$\alpha$}}}
\put(1587,3387){\makebox(0,0)[lb]{{\SetFigFont{12}{14.4}{\rmdefault}{\mddefault}{\updefault}$L_{-1}$}}}
\put(462,1362){\makebox(0,0)[lb]{{\SetFigFont{12}{14.4}{\rmdefault}{\mddefault}{\updefault}$J$}}}
\put(2937,3387){\makebox(0,0)[lb]{{\SetFigFont{12}{14.4}{\rmdefault}{\mddefault}{\updefault}$L_0$}}}
\put(4362,3387){\makebox(0,0)[lb]{{\SetFigFont{12}{14.4}{\rmdefault}{\mddefault}{\updefault}$L_1$}}}
\end{picture}
}
\end{center}
\caption {\label{fig:cinq} Jacob's nightmare}
\end{figure}

By arguments similar to those of the previous section, one shows that $\calO_5$ is
\irr\ and that its only \incomp\ toric $2$-\sbo s are pillows, and meet a single component
of the singular locus. By choice of $J$, this component must be a ladder. More
precisely, each incompressible pillow is obtained in the
following way: fix a ladder $L_n$, an integer $p\ge 2$ and a finite
sequence of consecutive rungs $r_1< \cdots < r_p$ of $L_n$. Then take
a sphere $S$ intersecting $\Sigma_{\calO_5}$ only on $L_n$, and such
that $S\cap L_n$ consists of four points on the uprights of $L_n$, two
immediately below $r_1$ and two immediately above $r_p$.  Observe that
this pillow is not canonical, since for instance, the pillow
associated to the sequence $(r_1-1), r_1$ will intersect it
essentially. Thus $\calO_5$ has no canonical toric $2$-\sbo s at all.

Further observe that for all $n<0$, there is a pillow $T_n$ associated to $L_n$
such that the intersection number of $T_n$ with $\alpha$ is $1$. (In fact,
there are infinitely many.) As a consequence, $T_n$ intersects $\alpha$
\emph{essentially}, i.e.~any pillow isotopic to $T_n$ still meets $\alpha$.

\begin{prop}
There exists no Seifert $3$-\sbo\ $U\subset\calO_5$ such that all incompressible
pillows can be isotoped into $U$.
\end{prop}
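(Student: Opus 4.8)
The plan is to argue by contradiction. Suppose $U\subset\calO_5$ is a Seifert $3$-\sbo\ into which every \incomp\ pillow can be isotoped; we use the pillows $T_n$ ($n<0$) constructed above. Since an orbifold isotopy of $\calO_5$ preserves each connected component of $\Sigma_{\calO_5}$ and $T_n$ meets only the ladder $L_n$, the $T_n$ are pairwise non-isotopic, so $U$ contains, up to isotopy, infinitely many pairwise non-isotopic \incomp\ toric $2$-\sbo s. A compact Seifert $3$-\orbi\ carries only finitely many isotopy classes of \incomp\ toric $2$-\sbo s, so $U$ is noncompact and its base $2$-\orbi\ $B$ is noncompact; in particular $B$ is not finitely covered by a pillow, so $U$ has no horizontal \incomp\ pillow, and by the orbifold classification of \incomp\ surfaces in Seifert fibered spaces (cf.~\cite[Chapter~2]{bmp}) every \incomp\ toric $2$-\sbo\ of $U$ is isotopic in $U$ to a vertical one.

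Next one transports the $T_n$ into $U$. Since $\calO_5$ is \irr\ and $\{T_n\}_{n<0}$ is a locally finite collection of pairwise disjoint \incomp\ $2$-\sbo s, an innermost-disk argument isotopes them into $U$ one at a time, keeping each disjoint from those not yet moved; the resulting copies $P_n\subset U$ are pairwise disjoint, \incomp\ in $U$, and each still lies in the class $[T_n]$. Each $P_n$ is isotopic in $U$ to a vertical pillow $\pi^{-1}(\gamma_n)$, where $\pi\colon U\to B$ is the Seifert projection and $\gamma_n$ a $1$-\sbo\ of $B$ (well-defined up to isotopy, with associated exceptional fibers lying in $L_n$); since the $P_n$ are pairwise disjoint the $\gamma_n$ are pairwise disjointable, so---passing to geodesic representatives in a hyperbolic structure on $B$---they may be taken pairwise disjoint. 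The $\gamma_n$ are pairwise non-isotopic, and for distinct $n$ they pass through distinct order-$2$ cone points of $B$ (namely those whose fibers lie in $L_n$); as the cone-point set of $B$ is discrete, these cone points leave every compact subset of $B$ as $n\to-\infty$. Finally, each vertical pillow $\pi^{-1}(\gamma_n)$ represents $[T_n]$, hence meets $\alpha$.

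The contradiction is then extracted at the level of $B$. The arc $\alpha$ is compact, so $\alpha\cap U$ is a finite union of compact arcs; one of them, $\delta$, meets infinitely many of the vertical pillows $\pi^{-1}(\gamma_n)$, so the compact set $\pi(\delta)\subset B$ meets $\gamma_n$ for infinitely many $n$. But each such $\gamma_n$ runs from $\pi(\delta)$ out to a cone point of $B$ that has left every fixed compact set, so the $\gamma_n$ are pairwise disjoint $1$-\sbo s escaping to infinity in a coherent way; feeding in the global geometry of $\calO_5$---the ladders being pairwise separated by planes and accumulating on the component $J$, across which there is no \incomp\ pillow, together with the controlled linking of the $L_n$ by the properly embedded line $\Lambda\supset\alpha$---one shows that a fixed compact subset of $B$ can meet only finitely many such $\gamma_n$. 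This contradiction completes the proof.

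The step I expect to be the main obstacle is precisely this last one. The naive assertion that a compact subset of a $2$-\orbi\ meets only finitely many members of a pairwise disjoint, pairwise non-isotopic family of essential $1$-\sbo s is in fact \emph{false}, so one genuinely has to exploit how the ladders of $\calO_5$ sit relative to $J$ and $\Lambda$; in effect this amounts to showing that no single Seifert fibration can be compatible with the ``ladder fibrations'' near infinitely many of the $L_n$ at once. All the remaining ingredients are either contained in the excerpt or are routine Seifert-fibered orbifold technology.
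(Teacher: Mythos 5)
Your proposal does not close the argument: the entire weight of the contradiction is placed on the final assertion that ``a fixed compact subset of $B$ can meet only finitely many of the $\gamma_n$'', and you yourself concede that the naive version of this statement is false and that one would have to ``feed in the global geometry of $\calO_5$'' to rescue it. That is precisely the content that has to be proved, and it is nowhere proved in your text; as written, the proof stops exactly where the difficulty begins. There is also a secondary structural issue: you treat $U$ as connected, with a single base $2$-orbifold $B$ and a single projection $\pi$, whereas the statement (and the situation) allows $U$ to have infinitely many components, in which case no single component need contain infinitely many of the $T_n$ and your passage to one base orbifold is not available.

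The paper's proof avoids your analysis of the base entirely by an algebraic step. The key Claim is that two incompressible pillows $T,T'$ associated to \emph{distinct} ladders cannot lie in the same component $V$ of a Seifert suborbifold: if they did, both would be vertical in $V$ by~\cite[Theorem 4]{bs:tori}, so one could choose curves $c\subset T$, $c'\subset T'$ (fibers) that are freely homotopic in $\calO_5$; but since the ladders are unlinked, a properly embedded nonsingular plane separates $T$ from $T'$, so by Seifert--van Kampen $\pi_1\calO_5$ is a free product with $\pi_1 T$ and $\pi_1 T'$ in distinct factors, and such curves cannot be conjugate. Granting the Claim, the pillows $T_n$ ($n<0$) must lie in pairwise distinct components of $U$; since each $T_n$ meets the compact arc $\alpha$ essentially, $\alpha$ would meet infinitely many components of $U$, which is impossible. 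This free-product/conjugacy argument is exactly the ingredient your sketch is missing: it is what rules out a single Seifert fibration (or even a single Seifert component) accommodating pillows from more than one ladder, and without it, or a genuine substitute, your proof has a gap at its decisive step.
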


This is a consequence from the following Claim:

\paragraph{Claim} Let $T,T'$ be two  \incomp\ pillows associated to distinct
ladders, and $U$ be a Seifert $3$-\sbo\ containing $T\cup T'$. Then $T,T'$
belong to distinct components of $U$.

\medskip
Indeed, applying the Claim to the infinite family of pillows $\{T_n\}_{n<0}$
described above, we see that the compact set $\alpha$ would have to meet
infinitely many distinct connected components of $U$, which is impossible.

Lastly, we prove the claim: if $T$ and $T'$ are contained in a connected
Seifert \sbo\ $V$, then they are vertical by~\cite[Theorem 4]{bs:tori}.
Therefore, there exist closed curves $c\subset T$ and $c'\subset T'$ such
that $c$ and $c'$ are freely isotopic in $V$, hence freely isotopic in $\calO_5$.
Now since the ladders are unlinked, there exists a properly embedded nonsingular plane $P\subset \calO_5$ separating $T$ from $T'$. Hence by the Seifert-van Kampen theorem, $\pi_1\calO_5$ can be expressed as a free product, with $\pi_1 T$ and
$\pi_1 T'$ belonging to distinct factors. Hence the elements of $\pi_1\calO_5$
represented by $c$ anc $c'$ cannot be conjugate. This is a contradiction.

\bibliographystyle{abbrv}
\bibliography{ex}

Institut de Recherche Math\'ematique Avanc\'ee, Universit\'e Louis
Pasteur, 7 rue Ren\'e Descartes, 67084 Strasbourg Cedex, France\\ 
\texttt{maillot@math.u-strasbg.fr}

\end{document}